   \definecolor{labelkey}{gray}{.8}
   \definecolor{refkey}{gray}{.8}
\providecommand{\R}{\mathbb{R}}
\providecommand{\divv}{{\rm {div}}}
\newcommand{\e}{\varepsilon}
\newcommand{\step}[1]{\medskip\noindent\textbf{Step #1. }}
\newcommand{\ignore}[1]{}
\newtheorem{definition}{Definition}
\newtheorem{proposition}{Proposition}
\newtheorem{theorem}{Theorem}
\newtheorem{lemma}{Lemma}
\newtheorem{corollary}{Corollary}
\newtheorem{assumption}{Assumption}
\author[M. Sch\"affner]{Mathias Sch\"affner}
\address[Mathias Sch\"affner]{Institut f\"ur Mathematik, MLU Halle-Wittenberg, Theodor-Lieser-Stra\ss e 5, 06120 Halle (Saale), Germany}
\email{mathias.schaeffner@mathematik.uni-halle.de}
\title[Local Lipschitz-bounds for scalar integral functionals]{Lipschitz bounds for nonuniformly elliptic integral functionals in the plane} 
\begin{document}
\maketitle


\begin{abstract}
We study local regularity properties of local minimizer of scalar integral functionals with controlled $(p,q)$-growth in the two-dimensional plane. We establish Lipschitz continuity for local minimizer under the condition $1<p\leq q<\infty$ with $q<3p$ which improve upon the classical results valid in the regime $q<2p$. Along the way, we establish an $L^\infty$-$L^2$-estimate for solutions of linear uniformly elliptic equations in the plane which is optimal with respect to the ellipticity contrast of the coefficients.
\end{abstract}

\section{Introduction}

In this short note, we revisit the question of Lipschitz-regularity for local minimizers of integral functionals of the form
\begin{equation}\label{eq:int}
w\mapsto \mathcal F(w;\Omega):=\int_\Omega F(\nabla w)\,dx,
\end{equation}
where the integrand $F$ satisfies so-called $(p,q)$-growth conditions which are described in the following

\begin{assumption}\label{ass}
Let $0<\nu\leq \Lambda<\infty$, $1<p\leq q<\infty$ and $\mu\in[0,1]$ be given. Suppose that $F:\R^2\to[0,\infty)$ is convex, locally $C^2$-regular in $\R^n\setminus\{0\}$ and satisfies
\begin{equation}\label{ass:Fpq}
\begin{cases}
\nu(\mu^2+|z|^2)^\frac{p}2\leq F(z)\leq \Lambda(\mu^2+|z|^2)^\frac{q}2+\Lambda (\mu^2+|z|^2)^\frac{p}2\\
|\partial^2 F(z)|\leq \Lambda(\mu^2+|z|^2)^\frac{q-2}2+\Lambda (\mu^2+|z|^2)^\frac{p-2}2,\\
\nu(\mu^2+|z|^2)^\frac{p-2}2|\xi|^2\leq \langle \partial^2 F(z)\xi,\xi\rangle,
\end{cases}
\end{equation}
for every choice of $z,\xi\in\R^n$ with $|z|>0$.
\end{assumption}
We recall that regularity theory for local minimizer of \eqref{eq:int} under Assumption~\ref{ass} with $1<p=q$ is classical, see e.g.\ \cite{Giu}. A systematic regularity theory in the case $p<q$ starts with the seminal contributions of Marcellini \cite{Mar91,Mar96} and became an active field of research (see e.g.\ \cite{MR21,Min24} or the introduction of \cite{Mar21} for recent overviews on the literature).
  
Before we state our main result, we recall a standard notion of local minimality in the context of integral functionals with $(p,q)$-growth 
\begin{definition}\label{def:localmin}
We call $u\in W_{\rm loc}^{1,1}(\Omega)$ a local minimizer of  $\mathcal F$ given in \eqref{eq:int} if and only if $F(\nabla u)\in L_{\rm loc}^1(\Omega)$ and for every $\Omega'\Subset\Omega$ holds $\mathcal F(u,\Omega')\leq \mathcal F(u+\varphi,\Omega')$ for every $\varphi\in W_0^{1,1}(\Omega')$.

%
\end{definition}
The main result of the present paper is

\begin{theorem}\label{T:1}
Let $\Omega\subset\R^2$ be an open bounded domain and suppose Assumption~\ref{ass} is satisfied with $1<p\leq q<\infty$ such that
\begin{equation}\label{eq:pq}
q <3p.
\end{equation}
Let $u\in W_{\rm loc}^{1,1}(\Omega)$ be a local minimizer of the functional $\mathcal F$ given in \eqref{eq:int}. Then $\nabla u$ is locally bounded in $\Omega$. Moreover,  there exists $c=c(\Lambda,\nu,p,q)\in[1,\infty)$ such that for all $B\Subset \Omega$ it holds
\begin{align}\label{est:T1}
\|\nabla u\|_{L^\infty(\frac12 B)}\leq& c\biggl(\fint_B F(\nabla u)\,dx\biggr)^\frac1p+c\biggl(\fint_B F(\nabla u)\,dx\biggr)^{\frac{2}{3p-q}}.
\end{align}
\end{theorem}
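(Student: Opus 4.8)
The plan is to follow the Moser-type iteration scheme adapted to $(p,q)$-growth, but to exploit the two-dimensional setting in a crucial way via a sharp $L^\infty$–$L^2$ estimate for linear elliptic equations (the one advertised in the abstract). First I would regularize: since local minimizers of genuine $(p,q)$-functionals need not a priori be smooth, I would approximate $F$ by a sequence of integrands $F_\delta$ with standard $p_\delta=q_\delta$-growth (or add $\delta(\mu^2+|z|^2)^{q/2}$ and mollify), solve the corresponding Dirichlet problems on a fixed ball $B'\Subset\Omega$ with the minimizer $u$ as boundary data, obtain smooth minimizers $u_\delta$, derive the a priori estimate \eqref{est:T1} for them with constants independent of $\delta$, and pass to the limit. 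This reduces everything to an a priori estimate for a smooth local minimizer.

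For the smooth minimizer, I would differentiate the Euler–Lagrange equation in direction $e_s$ to see that $\partial_s u$ solves a linear uniformly elliptic equation $\divv(A\nabla \partial_s u)=0$ with $A=\partial^2 F(\nabla u)$; the ellipticity ratio of $A$ is controlled, by \eqref{ass:Fpq}, by $(\mu^2+|\nabla u|^2)^{(q-p)/2}$, which is \emph{not} bounded — this is the whole difficulty. The key planar input is an estimate of the form $\|v\|_{L^\infty(\frac12 B)}\lesssim \|v\|_{L^2(B)}$ (up to scaling) for solutions of $\divv(A\nabla v)=0$ in the plane, where the constant degrades only like a power of the ellipticity contrast that is \emph{better} than what dimension-independent Moser iteration gives — in two dimensions one can afford an ellipticity ratio growing like $H^{\theta}$ with $\theta$ up to essentially $1$ rather than the classical $\theta<\tfrac12$ (this is the source of the improvement from $q<2p$ to $q<3p$). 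I would state and prove this linear estimate as a separate lemma — likely via a hole-filling/Widman argument combined with the planar Sobolev embedding $W^{1,1}\hookrightarrow L^2$, or via a Caccioppoli inequality on level sets where the logarithm of $\max(|\nabla u|,k)$ replaces $u$, tracking the ellipticity ratio explicitly through each iteration step so that the series of exponents converges precisely when $q<3p$.

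The iteration itself would run on the quantity $V:=(\mu^2+|\nabla u|^2)^{p/4}$ (so that $|\nabla V|^2 \sim (\mu^2+|\nabla u|^2)^{(p-2)/2}|\nabla^2 u|^2$ matches the lower bound in \eqref{ass:Fpq}): testing the differentiated equation with $\partial_s u\,\eta^2 (\mu^2+|\nabla u|^2)^{\gamma}$ for suitable exponents $\gamma$ yields a Caccioppoli-type inequality
\begin{equation*}
\int_{\frac12 B}|\nabla (V^{1+\beta})|^2 \lesssim \frac{C(\beta)}{|B|^{2/n}}\int_B V^{2(1+\beta)}\cdot(\text{ellipticity factor})\,,
\end{equation*}
and then the planar Sobolev inequality converts this into a reverse-Hölder/gain-of-integrability estimate; iterating over a geometric sequence of radii and exponents $\beta_j\to\infty$ produces the $L^\infty$ bound, with the two terms on the right of \eqref{est:T1} arising from the two terms ($(\cdot)^{q/2}$ and $(\cdot)^{p/2}$) in the upper bound for $F$ and $\partial^2 F$ — the exponent $\frac{2}{3p-q}$ is exactly the one needed to balance the worst term in the iteration and explains the constraint \eqref{eq:pq}. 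I expect the main obstacle to be twofold: (i) making the regularization genuinely compatible, i.e.\ ensuring $F_\delta(\nabla u_\delta)$ converges to $F(\nabla u)$ strongly enough in $L^1$ that the limiting bound is in terms of $\fint_B F(\nabla u)$ and not some larger quantity (this typically requires a careful choice of the approximation and a lower-semicontinuity plus energy-convergence argument), and (ii) bookkeeping the ellipticity contrast through the iteration sharply enough that the exponent threshold comes out as $3p$ rather than something smaller — this is where the special planar $L^\infty$–$L^2$ estimate, rather than a generic one, must be invoked at each scale.
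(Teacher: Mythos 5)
The overall scaffolding of your proposal — regularize $F$ and the Dirichlet problem, differentiate the Euler--Lagrange equation so that each $\partial_s u$ solves a linear uniformly elliptic equation in divergence form with $A=\partial^2F(\nabla u)$, invoke a sharp planar $L^\infty$--$L^2$ estimate, absorb the resulting power of $\|\nabla u\|_{L^\infty}$ via Young's inequality and a hole-filling lemma, and pass to the limit via lower semicontinuity and strict convexity — matches the paper's. The exact exponent arithmetic you flag ($\frac{2}{3p-q}$ balancing $(q-p)/4+p/2<p$) is also right.

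The genuine gap is in how you propose to prove the linear $L^\infty$--$L^2$ estimate, which is where all the improvement over $q<2p$ lives. You suggest a Moser/De~Giorgi-type route (``hole-filling/Widman'', ``Caccioppoli on level sets with a logarithm'', ``iterating over exponents $\beta_j\to\infty$''). But those iteration schemes degrade with the ellipticity contrast at a rate that in two dimensions does \emph{not} improve on Marcellini's $q<2p$; the paper says this explicitly about the Moser-based methods of \cite{BS19a,BS19c,BS24}. What the paper actually uses for Proposition~\ref{P:2} is the \emph{maximum principle}: for a subsolution, $\sup_{B_{1/2}}v\leq\|v_+\|_{L^\infty(\partial B_r)}$ for every $r\in(\tfrac12,\tfrac34)$, and one then picks a good circle $\partial B_r$ by a one-dimensional interpolation inequality (Lemma~\ref{L:1dinterpolation} / Corollary~\ref{C:pickslice}) which gives $\|v\|_{L^\infty(\partial B_r)}\lesssim\|v\|_{L^2}^{1/2}\|\nabla v\|_{L^2}^{1/2}$ on the annulus. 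Feeding in a single Caccioppoli step $\|\nabla v_+\|_{L^2}\lesssim\Lambda^{1/2}\|v_+\|_{L^2}$ immediately yields the sharp exponent $(\Lambda/\nu)^{1/4}$ — no iteration over powers at all — and this $1/4$ is precisely what turns $\|\nabla u\|_{L^\infty}^{(q-p)/4}$ into the condition $q<3p$. Correspondingly, at the nonlinear level the paper does not iterate $V^{1+\beta}$; it applies the maximum principle directly to $\partial_s u$, then selects a good circle where $E_\mu(\nabla u)$ is controlled, and only uses one-step Caccioppoli plus Lemma~\ref{L:holefilling} (the classical Giusti iteration on radii, not on exponents). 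The maximum principle is also why the paper emphasizes that the argument is intrinsically scalar; an iteration argument of the kind you sketch would not force that restriction and would not produce $\Lambda^{1/4}$.
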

Theorem~\ref{T:1} improves upon the by now classical findings of Marcellini \cite{Mar96}, where Lipschitz-continuity is proven under the relation $q<2p$. More precisely, Marcellini considered integral functionals in arbitrary dimensions $n\geq2$ and proved local Lipschitz continuity for minimizer of \eqref{eq:int} under the restriction
\begin{equation}\label{eq:pqold}
\frac{q}p<1+\frac2n,
\end{equation}
that is $q<2p$ for $n=2$. Recently, the condition \eqref{eq:pqold} for Lipschitz-continuity was relaxed in joint works with Bella \cite{BS19c,BS24} to 
\begin{equation}\label{eq:assq2}
\frac{q}{p}<1+\frac2{n-1}
\end{equation}
in dimensions $n\geq3$, but the methods of \cite{BS19c,BS24} (that are based on \cite{BS19a}) do not give any improvement compared to Marcellini's result in dimension $n=2$.  Combining Theorem~\ref{T:1} with the findings of \cite{BS19c,BS24}, we have  
\begin{theorem}\label{T:2}
Let $\Omega\subset \R^n$, $n\geq2$ and suppose Assumption~\ref{ass} is satisfied with $1<p\leq q<\infty$ such that \eqref{eq:assq2}. Let $u\in W_{\rm loc}^{1,1}(\Omega)$ be a local minimizer of the functional $\mathcal F$ given in \eqref{eq:int}. Then, $u\in W_{\rm loc}^{1,\infty}(\Omega)$. 
\end{theorem}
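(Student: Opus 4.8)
The plan is to deduce Theorem~\ref{T:2} by combining Theorem~\ref{T:1} with the results already available in the literature, splitting according to the dimension. For $n=2$, the hypothesis \eqref{eq:assq2} reads $q/p<1+2/(n-1)=3$, which is precisely the condition \eqref{eq:pq} of Theorem~\ref{T:1}; hence Theorem~\ref{T:1} applies verbatim and yields $u\in W^{1,\infty}_{\mathrm{loc}}(\Omega)$, together with the explicit estimate \eqref{est:T1}. For $n\geq3$, the hypothesis \eqref{eq:assq2} is exactly the condition under which local Lipschitz continuity for local minimizers of \eqref{eq:int} under Assumption~\ref{ass} was established in the joint works with Bella \cite{BS19c,BS24}; invoking those results gives $u\in W^{1,\infty}_{\mathrm{loc}}(\Omega)$ in the remaining range of dimensions. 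Since the two cases together cover all $n\geq2$, this proves the theorem.

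The only genuine point requiring care is to check that the hypotheses of the cited theorems match Assumption~\ref{ass} and Definition~\ref{def:localmin} used here. Concretely, one should verify that the structure conditions \eqref{ass:Fpq}, including the presence of the lower-order term $\Lambda(\mu^2+|z|^2)^{p/2}$ in the upper bounds and the admissibility of the degenerate case $\mu=0$, are covered by (or can be reduced to) the assumptions in \cite{BS19c,BS24}; this is standard, as those papers work under the same controlled $(p,q)$-growth framework, and the lower-order $p$-term only helps. Similarly, the notion of local minimizer in Definition~\ref{def:localmin} coincides with the one used there. No new analysis is needed; the argument is a citation-level synthesis.

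I do not expect a substantial obstacle here: the entire content of Theorem~\ref{T:2} is already contained in Theorem~\ref{T:1} (for $n=2$) and in \cite{BS19c,BS24} (for $n\geq3$), and the statement is phrased precisely so that the two ranges of validity of $q/p$ abut without a gap at $n=2$. The mild bookkeeping task is simply to confirm that $1+2/(n-1)$ specializes to $3$ when $n=2$ and to $1+2/(n-1)\le 1+1=2<\ldots$ — more precisely, that for every $n\geq3$ one has $1+2/(n-1)\le 2$ is false; rather $1+2/(n-1)\in(2,3]$, so in those dimensions \eqref{eq:assq2} is strictly weaker than Marcellini's \eqref{eq:pqold} and is the sharp threshold reached in \cite{BS19c,BS24}. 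Assembling these observations completes the proof.
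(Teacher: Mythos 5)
Your proposal is correct and matches the paper exactly: the paper offers no separate proof of Theorem~\ref{T:2}, which is stated as an immediate consequence of combining Theorem~\ref{T:1} (which covers $n=2$, since \eqref{eq:assq2} reduces to $q/p<3$ there) with the results of \cite{BS19c,BS24} (which cover $n\geq3$ under \eqref{eq:assq2}). One small numerical slip in your closing aside: for $n\geq3$ one in fact has $1+\tfrac2{n-1}\in(1,2]$ (with equality $=2$ at $n=3$), not $(2,3]$; this does not affect the argument, and your qualitative point that \eqref{eq:assq2} is strictly weaker than Marcellini's \eqref{eq:pqold} remains valid because $1+\tfrac2{n-1}>1+\tfrac2n$ for all $n\geq2$.
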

%
%

Optimality of condition \eqref{eq:pq} in Theorem~\ref{T:1} is not clear. We note that the results of \cite{CS23,HS19} imply that in the situation of Theorem~\ref{T:1} local minimizer are locally bounded for every choice of $1\leq p\leq q<\infty$, and recall that by Marcellini's counterexample, see \cite[Theorem 6.1]{Mar91}, this is in general not true in dimensions $n\geq3$ (and exponents $1<p<n-1$ and $q<\infty$ sufficiently large).

\smallskip

The strategy to prove Theorem~\ref{T:1} is similar to \cite{BM20,BS24} and relies on careful estimates for certain \textit{uniformly elliptic} problems. Indeed, one of the main technical achievement in \cite{BS24} is to establish a local $L^\infty$-$L^2$-estimate for solutions of linear uniformly elliptic equations which is essentially optimal with respect to the ellipticity contrast in dimension $n\geq3$, see \cite[Proposition 1]{BS24}.  Here, we provide the analogous statement in dimension $n=2$, namely
\begin{proposition}\label{P:2}
Let $B=B_R(x_0)\subset\R^2$. There exists $c<\infty$, such that the following is true. Let $0<\nu\leq \lambda<\infty$ and suppose $a\in L^\infty(B;\R^{2\times 2})$ satisfies that $a(x)$ is symmetric for almost every $x\in B$ and is uniformly elliptic in the sense
\begin{equation}\label{def:ellipt}
\nu|z|^2\leq a(x)z\cdot z\leq \Lambda|z|^2\quad\mbox{for every $z\in\R^2$ and almost every $x\in B$.}
\end{equation}
Let $v\in W^{1,2}(B)$ be a subsolution, that is it satisfies
\begin{equation}\label{def:subsolution}
\int_{B}a\nabla v\cdot\nabla \varphi\leq0\qquad\mbox{for all $\varphi\in C_c^1(B)$ with $\varphi\geq0$.}
\end{equation}
Then,
\begin{equation}\label{est:P1}
\sup_{\frac12 B} v\leq c\biggl(\frac{\Lambda}{\nu}\biggr)^\frac14\biggl(\fint_{B}(v_+)^2\,dx\biggr)^\frac12,
\end{equation}
where $v_+:=\max\{v,0\}$. Moreover, the exponent $\frac{1}4$ in \eqref{est:P1} is optimal.
\end{proposition}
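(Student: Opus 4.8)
The plan is to run a Caccioppoli--Sobolev (Moser-type) iteration, as in \cite{BM20,BS24}, but with two refinements tailored to the present setting: a Caccioppoli inequality with the \emph{sharp} linear dependence on the ellipticity contrast $\theta:=\Lambda/\nu$, and a genuinely two-dimensional Sobolev/mean-value input which, when balanced against the Caccioppoli inequality along the iteration, produces the exponent $\tfrac14$ in \eqref{est:P1} rather than the exponent $\tfrac12$ delivered by the naive iteration. After translating and rescaling we may assume $B=B_1(0)$; dividing $a$ and $v$ by $\nu$ we may assume $\nu=1$ and $\Lambda=\theta\ge 1$; and since $t\mapsto t_+$ is convex and nondecreasing, $v_+$ is again a subsolution with $\sup_{B_{1/2}}v\le\sup_{B_{1/2}}v_+$, so we may also assume $v\ge 0$.

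The first step is a Caccioppoli inequality for the powers $w:=v^{\beta}$, $\beta\ge 1$, which are again nonnegative subsolutions. Testing \eqref{def:subsolution} with $\varphi=\eta^{2}v^{2\beta-1}$ for a cut-off $\eta\in C_c^1(B_1)$ and writing $v^{\beta-1}\nabla v=\tfrac1\beta\nabla w$, the decisive technical point is to \emph{avoid} Young's inequality — which would cost a factor $\theta^{2}$ — and instead apply the Cauchy--Schwarz inequality to the bilinear form $(\xi,\zeta)\mapsto\int a\,\xi\cdot\zeta$. This gives the contrast-free estimate
\[
\int\eta^{2}\,a\nabla w\cdot\nabla w\;\le\;4\int w^{2}\,a\nabla\eta\cdot\nabla\eta ,
\]
and hence, using only $|z|^{2}\le a(x)z\cdot z\le\theta|z|^{2}$ on the two sides, $\int\eta^{2}|\nabla w|^{2}\le 4\theta\int|\nabla\eta|^{2}w^{2}$, with a constant that is linear in $\theta$ and uniform in $\beta$.

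The heart of the argument, and the place where $n=2$ is used, is a model estimate for constant coefficients. An affine change of variables $y=Sx$ with $\det S=1$ and $SaS^{\top}$ a multiple of the identity turns $\divv(a\nabla v)=0$ into Laplace's equation, maps $B_{1}$ onto an ellipse $E$ with $|E|=|B_{1}|$ whose semi-axes have ratio at most $\sqrt{\theta}$, and turns a subsolution into a subharmonic function $\hat v$. The largest ball inscribed in $E$ around the image of $0$ then has area $\ge |E|/\sqrt{\theta}$, so the sub-mean-value inequality together with the Cauchy--Schwarz inequality yields $\sup_{\frac12 B}v=\sup_{\frac12 E}\hat v\le c\,\theta^{1/4}\bigl(\fint_{E}\hat v^{2}\bigr)^{1/2}=c\,\theta^{1/4}\bigl(\fint_{B}v^{2}\bigr)^{1/2}$ — exactly the exponent in \eqref{est:P1}. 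The remaining task is to propagate this $\theta$-gain to variable $a$: one localises on a geometric sequence of balls $B_{r_{k}}$ shrinking from $B_{1}$ to $B_{1/2}$ and at each scale combines the Caccioppoli inequality above with the two-dimensional mechanism (applied to the principal part of the equation), absorbing the error terms, to obtain for all $\tfrac12\le\rho<r\le 1$ and $\beta\ge 1$ an inequality of the form
\[
\Bigl(\fint_{B_{\rho}}v^{2\chi\beta}\Bigr)^{\frac{1}{2\chi\beta}}\le\Bigl(c\,\theta^{a}\beta^{b}(r-\rho)^{-c}\Bigr)^{\frac1\beta}\Bigl(\fint_{B_{r}}v^{2\beta}\Bigr)^{\frac{1}{2\beta}},
\]
in which the two-dimensional refinement has \emph{halved} the effective power $a$ of $\theta$ relative to the naive Caccioppoli--Ladyzhenskaya bound; choosing $\beta_{k}=\chi^{k}$, iterating and summing the resulting geometric series then yields \eqref{est:P1}, and a routine De~Giorgi-type truncation would, if needed, upgrade the $L^{2}$-norm on the right-hand side to $\|v_{+}\|_{L^{2}}$. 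The main obstacle I anticipate lies precisely here: the naive combination of the (already sharp) Caccioppoli inequality with the Ladyzhenskaya inequality gives only $\theta^{1/2}$ in \eqref{est:P1}, and extracting the extra factor $\theta^{-1/4}$ forces one to exploit the conformal two-dimensional structure of the frozen operator uniformly through \emph{all} scales of the iteration, not merely at a single one, which is where the bookkeeping becomes delicate.

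Finally, for the optimality of the exponent $\tfrac14$ I would produce an explicit coefficient field of contrast $\theta$ together with a subsolution for which the two sides of \eqref{est:P1} are comparable up to a $\theta$-independent constant; the natural regime to examine is the one dual to the model estimate above, where in the affine coordinates diagonalising $a$ the solution is a (sub)harmonic function on a domain of eccentricity $\sqrt{\theta}$ and the sub-mean-value property over the largest inscribed ball — which produced the factor $\theta^{1/4}$ — is seen to be saturated.
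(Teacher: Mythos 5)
Your Caccioppoli step is exactly right and matches the paper's: testing with $\varphi=\eta^2 v_+$ and using Cauchy--Schwarz for the bilinear form $\int a\,\xi\cdot\zeta$ (rather than Young's inequality) gives $\int\eta^2|\nabla v_+|^2\lesssim\Lambda\int|\nabla\eta|^2 v_+^2$, linear in the contrast. The optimality example you sketch (diagonalize $a$, eccentricity $\sqrt\theta$, sub-mean-value saturated) is also morally the paper's example $v=1+x_2^2-\Lambda x_1^2$ with $a=\operatorname{diag}(1,\Lambda)$.

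However, the central mechanism you propose --- Moser iteration with a frozen-coefficient affine change of variables applied ``to the principal part'' at each scale --- has a genuine gap that is not just bookkeeping. The affine map $y=Sx$ that turns $\operatorname{div}(a\nabla v)=0$ into Laplace's equation requires $a$ to be \emph{constant}; the proposition allows arbitrary measurable $L^\infty$ coefficients, and for such coefficients there is no freezing argument: the operator with frozen coefficients is not a small perturbation of the true one at any scale, and the errors you hope to ``absorb'' are of order one. So the claimed inequality at each dyadic scale, with the effective power of $\theta$ halved relative to the Caccioppoli--Ladyzhenskaya bound, is unsupported, and the naive iteration you correctly diagnose only yields $\theta^{1/2}$. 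This is precisely why the paper abandons iteration entirely.

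The paper's actual route is shorter and bypasses the difficulty. The two ingredients beyond Caccioppoli are: (i) the \emph{maximum principle} for subsolutions of $\operatorname{div}(a\nabla v)\ge 0$ (valid for merely measurable uniformly elliptic $a$, \cite[Thm.~8.1]{GT}), giving $\sup_{B_{1/2}}v\le\|v_+\|_{L^\infty(\partial B_r)}$ for every $r\in[\tfrac12,\tfrac34]$; and (ii) a one-dimensional trace/interpolation inequality on circles: averaging the elementary bound $\|w\|_{L^\infty(\partial B_r)}\lesssim\fint_{\partial B_r}|w|+\|w\|_{L^2(\partial B_r)}^{1/2}\|\nabla_{\mathrm{tan}}w\|_{L^2(\partial B_r)}^{1/2}$ over $r\in(\tfrac12,\tfrac34)$ (Fubini) yields
\[
\sup_{B_{1/2}}v\;\lesssim\;\|v_+\|_{L^2(B_{3/4}\setminus B_{1/2})}+\|v_+\|_{L^2}^{1/2}\,\|\nabla v_+\|_{L^2}^{1/2}.
\]
Inserting the linear-in-$\Lambda$ Caccioppoli bound for $\|\nabla v_+\|_{L^2(B_{3/4})}$ gives $\Lambda^{1/4}$ immediately; no iteration, no frozen coefficients. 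The ``two-dimensionality'' enters only through the fact that $\partial B_r$ is one-dimensional, where $L^\infty$ interpolates as $L^2$ to the power $1/2$ times the $H^1$-seminorm to the power $1/2$. To salvage your proof you would need to replace the affine-change/mean-value step by this maximum-principle plus circle-trace argument.
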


The proof of Proposition~\ref{P:2} is elementary and only relies on the maximum principle, a suitable one dimensional Poincar\'e inequality and the Caccioppoli inequality, see Section~\ref{sec:proofP2}. However, Proposition~\ref{P:2} (or rather its proof) is crucial for the proof of Theorem~\ref{T:1}. To illustrate the relation between Proposition~\ref{P:2} and Theorem~\ref{T:1}, we consider the case that $F$ satisfies Assumption~\ref{ass} with $p=2<q$: Let $u$ be a local minimizer of $\mathcal F(\cdot,B_1)$ and assume that $u$ is smooth. Standard arguments yield
\begin{equation}\label{intro:lineq}
\divv (A(x)\nabla \partial_i u)=0\qquad\mbox{where}\qquad A:=\partial^2F(\nabla u).
\end{equation}
Hence, $\partial_i u$ satisfies a linear elliptic equation where $A$ satisfies (using \eqref{ass:Fpq})
$$
|z|^2\lesssim Az\cdot z\lesssim  (1+\|\nabla u\|_{L^\infty(B_1)}^{q-2})|z|^2. 
$$
Hence, Proposition~\ref{P:2} (applied to $\partial_1 u $ and $\partial_2 u$) yields
\begin{equation}\label{intro:linftyl2}
\|\nabla u\|_{L^\infty(B_\frac12)}\lesssim (1+ \|\nabla u\|_{L^\infty(B_1)}^\frac{q-2}4)\|\nabla u\|_{L^2(B_1)}.
\end{equation}
Appealing to some well-known iteration arguments, it is possible to absorb the prefactor $ \|\nabla u\|_{L^\infty(B_1)}^\frac{q-2}4$ on the right-hand side in \eqref{intro:linftyl2} provided that $\frac{q-2}4<1$, which yields the restriction $q<6$ coinciding with \eqref{eq:pq} for $p=2$. Once an a priori Lipschitz estimate for smooth minimizer is established, the proof of Theorem~\ref{T:1} follows by a careful regularization and approximation procedure -- for this we closely follow related arguments from \cite{DM23,DM24}.

\smallskip

Next, we discuss further related literature: Firstly, we mention the recent paper \cite{DKK}, where a variation of $p,q$-growth conditions is considered and local Lipschitz-continuity is proven in dimension two for all exponents $2\leq p<q$. In \cite{DKK}, the assumptions \eqref{ass:Fpq} are strengthened by assuming an additional relation between the growth of $\partial^2 F$ and $\partial F$. Let us now discuss some possible extensions of Theorem~\ref{T:1}: Here we assume polynomial growth conditions with $p>1$. There are several results (and recent interest) for integrands satisfying more general growth conditions and in particular allowing for nearly linear growth, see e.g.\ \cite{BR20,DM23b,DP24,EMMP22,FM,Mar96}. We expect that the method to prove Theorem~\ref{T:1} can be combined with the techniques of e.g.\ \cite{EMMP22,FM,Mar96} to improve (in parts) these results in $n=2$ (and for $n\geq 3$ with the methods of \cite{BS24}). As mentioned above, we rely in the proof of Theorem~\ref{T:1} on the maximum principle and thus the argument cannot be used for vectorial problems. It would be interesting to find an argument which is applicable for systems to improve higher integrability results, see e.g.\ \cite{CKP14,ELM99,S21,Koch}, for vectorial integral functionals in dimension $n=2$, see \cite{BF03} for a full regularity result under Assumption~\ref{ass} with $n=2$ and $q<2p$. 

\smallskip

In the above discussion, we mostly ignored the rich regularity theory for non-autonomous non-uniformly elliptic integral functionals, we refer again to the recent overviews \cite{MR21,Min24} and mention some recent papers \cite{BCM18,CMMP21,CFK20,DM21,DM23,EMM19,EP23,HO21} and \cite{BDS20,BGS21,Borowski2023} for related results about the Lavrentiev phenomena.

\section{Preliminaries}

We begin with an elementary one dimensional interpolation inequality. 
\begin{lemma}\label{L:1dinterpolation} Let $I\subset \R$ be a bounded interval. Then, it holds 
\begin{equation}\label{est:1dinterpolation}
\forall u\in H^1(I):\qquad \|u- (u)_I\|_{L^\infty(I)}\leq \sqrt{2}\|u-(u)_I\|_{L^2(I)}^\frac12\|u'\|_{L^2(I)}^\frac12,
\end{equation}
where $(u)_I:=\fint_Iu(s)\,ds$.
\end{lemma}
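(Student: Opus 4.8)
The plan is to prove the one-dimensional interpolation inequality \eqref{est:1dinterpolation} directly. Without loss of generality I would first reduce to the case $(u)_I = 0$ by replacing $u$ with $u - (u)_I$; this does not change $u'$, so it suffices to show $\|u\|_{L^\infty(I)} \leq \sqrt{2}\|u\|_{L^2(I)}^{1/2}\|u'\|_{L^2(I)}^{1/2}$ for $u \in H^1(I)$ with mean zero. Since $u$ has mean zero on the interval $I$, there exists a point $x_0 \in I$ with $u(x_0) = 0$ (by the intermediate value theorem, using that a continuous mean-zero function must vanish somewhere, or else it would be of one sign and have nonzero mean).

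Next, for an arbitrary point $x \in I$, I would write $u(x)^2 = u(x)^2 - u(x_0)^2 = \int_{x_0}^{x} (u^2)'(s)\,ds = \int_{x_0}^{x} 2u(s)u'(s)\,ds$, and hence estimate $u(x)^2 \leq 2\int_I |u(s)||u'(s)|\,ds \leq 2\|u\|_{L^2(I)}\|u'\|_{L^2(I)}$ by the Cauchy--Schwarz inequality. Taking the supremum over $x \in I$ and then the square root gives $\|u\|_{L^\infty(I)} \leq \sqrt{2}\|u\|_{L^2(I)}^{1/2}\|u'\|_{L^2(I)}^{1/2}$, which is the claim. A density argument handles the fact that one should first verify the fundamental theorem of calculus identity for smooth functions and then pass to general $H^1(I)$ functions, but in one dimension $H^1(I)$ functions are absolutely continuous, so this is immediate.

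There is essentially no serious obstacle here: the only mild subtlety is justifying the existence of a zero of the mean-zero function $u$, which follows from continuity of the $H^1$ representative on the one-dimensional interval together with the sign argument, and the identity $u(x)^2 = \int_{x_0}^x (u^2)'$, which is legitimate since $u^2 \in W^{1,1}(I)$ when $u \in H^1(I)$. I would present the argument in the clean order: reduce to mean zero, locate a zero, apply the fundamental theorem of calculus to $u^2$, and close with Cauchy--Schwarz.
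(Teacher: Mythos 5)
Your proof is correct and follows essentially the same path as the paper's: reduce to mean zero, locate a zero of $u$ (the paper invokes the integral mean value theorem, you invoke the intermediate value theorem and a sign argument, which amounts to the same thing), apply the fundamental theorem of calculus to $u^2$, and close with Cauchy--Schwarz. The only cosmetic difference is that the paper reduces to $C^1$ by density whereas you work directly with the absolutely continuous representative of $u \in H^1(I)$; both are standard and equivalent.
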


\begin{proof}
By density it suffices to show \eqref{est:1dinterpolation} for $u\in C^1(I)$ and without loss of generality we assume $(u)_I=0$. By the integral mean value theorem there exists $y\in I$ such that $u(y)=(u)_I=0$. By the fundamental theorem of calculus, we have for every $x\in I$
\begin{equation*}
u^2(x)=2\biggl|\int_y^xu(s)u'(s)\,ds\biggr|\leq2 \|u\|_{L^2(I)}\|u'\|_{L^2(I)}.
\end{equation*}
We obtain \eqref{est:1dinterpolation} by applying $\sup_{x\in I}$ and taking the square-root of the resulting inequality. 
\end{proof}

In the following, we use Lemma~\ref{L:1dinterpolation} in the form below
\begin{corollary}\label{C:pickslice}
There exists $c<\infty$ such that the following is true: For every $u\in H^1(B_1)$ and $\frac14\leq \rho<\sigma<1$, it holds
\begin{equation}
\inf_{r\in(\rho,\sigma)}\|u\|_{L^\infty(S_r)}\leq \frac{c}{(\sigma-\rho)^\frac12}\biggl(\|u\|_{L^2(B_\sigma\setminus B_\rho)}+\|u\|_{L^2(B_\sigma\setminus B_\rho)}^\frac12\|\nabla u\|_{L^2(B_\sigma\setminus B_\rho)}^\frac12\biggr)
\end{equation}
\end{corollary}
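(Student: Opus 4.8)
The plan is to deduce Corollary~\ref{C:pickslice} from Lemma~\ref{L:1dinterpolation} by polar coordinates plus an averaging (pigeonhole) argument over the radius. First I would write points in the annulus $B_\sigma\setminus B_\rho$ in polar coordinates $x=r\omega$, $\omega\in S^1$. For fixed $\omega$, the map $r\mapsto u(r\omega)$ is in $H^1((\rho,\sigma))$ for a.e.\ $\omega$ (by Fubini and the fact that $u\in H^1(B_1)$, noting $|\partial_r u|\le |\nabla u|$). Applying Lemma~\ref{L:1dinterpolation} on the interval $I=(\rho,\sigma)$ to $r\mapsto u(r\omega)$ controls $\|u(\cdot\,\omega)\|_{L^\infty((\rho,\sigma))}$ by the mean value over $I$ plus the interpolation term; since $\rho\ge\frac14$ and $\sigma<1$, the weight $r$ in the measure $r\,dr\,d\omega$ is comparable to a constant, so passing between weighted and unweighted one-dimensional norms only costs an absolute constant (and a factor $(\sigma-\rho)^{-1}$ from normalizing $\fint_I$ against $\int_I$). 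This yields, for a.e.\ $\omega$,
\begin{equation*}
\|u(\cdot\,\omega)\|_{L^\infty((\rho,\sigma))}^2 \le \frac{c}{\sigma-\rho}\biggl(\int_\rho^\sigma |u(r\omega)|^2\,r\,dr + \Bigl(\int_\rho^\sigma|u(r\omega)|^2 r\,dr\Bigr)^{1/2}\Bigl(\int_\rho^\sigma|\nabla u(r\omega)|^2 r\,dr\Bigr)^{1/2}\biggr).
\end{equation*}

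Next I would integrate this inequality over $\omega\in S^1$ and use Cauchy–Schwarz on the product term (in the $d\omega$ integral) to recognize the right-hand side as $\frac{c}{\sigma-\rho}\bigl(\|u\|_{L^2(B_\sigma\setminus B_\rho)}^2 + \|u\|_{L^2(B_\sigma\setminus B_\rho)}\|\nabla u\|_{L^2(B_\sigma\setminus B_\rho)}\bigr)$. On the left-hand side, $\int_{S^1}\|u(\cdot\,\omega)\|_{L^\infty((\rho,\sigma))}^2\,d\omega$ bounds $\int_{S^1}\|u\|_{L^\infty(S_r)}^2 d\omega$... — more precisely, for each fixed $r\in(\rho,\sigma)$ one has $|u(r\omega)|\le \|u(\cdot\,\omega)\|_{L^\infty((\rho,\sigma))}$, hence $\|u\|_{L^\infty(S_r)}^2 = \mathrm{ess\,sup}_\omega |u(r\omega)|^2$; this last step needs a little care since the essential supremum over $\omega$ does not commute with the integral. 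To handle it cleanly I would instead argue by contradiction/averaging: set $m:=\inf_{r\in(\rho,\sigma)}\|u\|_{L^\infty(S_r)}$ and note that for every $r\in(\rho,\sigma)$, $\|u\|_{L^\infty(S_r)}\ge m$, so $\int_{S^1}|u(r\omega)|^2\,d\omega$ need not be large, but $\mathrm{ess\,sup}_\omega |u(r\omega)| \ge ?$ — actually the robust route is: for a.e.\ $\omega$, $\|u(\cdot\,\omega)\|_{L^\infty((\rho,\sigma))}\ge \mathrm{ess\,sup}_{r}|u(r\omega)|$, and then by Fatou/monotone arguments $\mathrm{ess\,inf}_r \mathrm{ess\,sup}_\omega|u(r\omega)|^2 \le \fint_\rho^\sigma\!\!\int_{S^1}(\cdots)$ fails in general — so the correct and simplest fix is to bound the \emph{infimum over $r$} of $\|u\|_{L^\infty(S_r)}^2$ by the \emph{average over $r$}, then use $\|u\|_{L^\infty(S_r)}^2 = \mathrm{ess\,sup}_\omega|u(r\omega)|^2 \le \mathrm{ess\,sup}_\omega \|u(\cdot\,\omega)\|_{L^\infty((\rho,\sigma))}^2$, and integrate the pointwise-in-$\omega$ bound above after taking $\mathrm{ess\,sup}_\omega$ — but $\mathrm{ess\,sup}$ and $\int d\omega$ clash again.

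The clean resolution, and the one I would write, avoids $L^\infty$ in $\omega$ entirely: for each $r$, by the one–dimensional embedding $L^\infty(S_r)\hookrightarrow$ nothing useful, so instead average in $r$ first. Concretely, $\inf_{r}\|u\|_{L^\infty(S_r)}^2 \le \fint_\rho^\sigma \|u\|_{L^\infty(S_r)}^2\,dr$, and for the right-hand side I use that $\|u\|_{L^\infty(S_r)} = \sup_\omega |u(r\omega)|$ together with a \emph{two-dimensional} Sobolev-type slice bound: actually the honest statement is that the function $r\mapsto \|u\|_{L^\infty(S_r)}$ need not even be the right object, but $r\mapsto \|u(r\cdot)\|_{L^\infty(S^1)}$ is controlled in an averaged sense by Gagliardo–Nirenberg on each circle plus radial integration — this is getting complicated. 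The genuinely correct and short argument is: take $\omega\mapsto g(\omega):=\|u(\cdot\,\omega)\|_{L^\infty((\rho,\sigma))}$, which by the displayed inequality satisfies $\int_{S^1} g^2 \le \frac{c}{\sigma-\rho}(\|u\|_{L^2}^2 + \|u\|_{L^2}\|\nabla u\|_{L^2})$; then since $g\in L^2(S^1)$ is finite a.e.\ but we want a pointwise-in-$r$ sup bound, observe that $\|u\|_{L^\infty(S_r)}\le \|g\|_{L^\infty(S^1)}$ is false in general — therefore the statement of the corollary must be interpreted with $\inf_r$ genuinely helping: I pick a \emph{good radius} $r_*$ where additionally $\|u(r_*\cdot)\|_{L^\infty(S^1)}$ is controlled, combining the above with a one–dimensional embedding on the circle $S_{r_*}$, namely $\|u(r_*\cdot)\|_{L^\infty(S^1)}^2 \lesssim \|u(r_*\cdot)\|_{L^2(S^1)}\|\partial_\tau u(r_*\cdot)\|_{L^2(S^1)} + \|u(r_*\cdot)\|_{L^2(S^1)}^2$ (Lemma~\ref{L:1dinterpolation} on the circle), and choosing $r_*\in(\rho,\sigma)$ simultaneously making $\|u(r_*\cdot)\|_{L^2(S^1)}$ and $\|\nabla u(r_*\cdot)\|_{L^2(S^1)}$ no larger than twice their radial averages (Chebyshev, so such $r_*$ exists in a set of positive measure). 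Tracking constants, with $\rho\ge\frac14$ so that $r_*\sim1$, the radial averages are $\frac{1}{\sigma-\rho}\|u\|_{L^2(B_\sigma\setminus B_\rho)}^2$ and $\frac{1}{\sigma-\rho}\|\nabla u\|_{L^2(B_\sigma\setminus B_\rho)}^2$, and the claimed inequality follows with an absolute constant $c$.

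The main obstacle, as the discussion above signals, is purely bookkeeping: reconciling the polar-coordinate weight and the normalized averages to extract the clean $(\sigma-\rho)^{-1/2}$ prefactor, and selecting a single radius $r_*$ on which both the tangential $L^2$ norm of $u$ and the full $L^2$ norm of $\nabla u$ are controlled by their averages — this is a two-constraint Chebyshev argument, which works precisely because $\frac14\le\rho<\sigma<1$ keeps all geometric factors comparable to absolute constants. No genuinely hard analysis is involved; the content is entirely in Lemma~\ref{L:1dinterpolation} applied once in the radial and once in the tangential direction.
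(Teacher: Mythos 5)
Your final approach is correct and, modulo bookkeeping, the same as the paper's: the real work is done by Lemma~\ref{L:1dinterpolation} applied \emph{tangentially} on a circle $\partial B_r$, and the radius $r$ is then selected by an averaging argument over $r\in(\rho,\sigma)$. Where you differ from the paper is only in the mechanism for selecting the good radius. You propose a two-constraint Chebyshev argument: pick $r_*$ simultaneously making $\|u(r_*\cdot)\|_{L^2(S^1)}^2$ and $\|\nabla u(r_*\cdot)\|_{L^2(S^1)}^2$ at most a fixed multiple of their radial averages (each constraint fails on a set of small measure, so a common good $r_*$ exists). The paper instead bounds $\inf_r\|u\|_{L^\infty(\partial B_r)}^2$ by its $dr$-average, splits off the mean value term, applies Lemma~\ref{L:1dinterpolation} under the $r$-integral, and then uses Cauchy--Schwarz in $r$ to separate the $\|u_r\|_{L^2}$ and $\|\nabla_{\rm tan}u_r\|_{L^2}$ factors. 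These two routes are essentially equivalent; the paper's version is marginally more compact because a single Cauchy--Schwarz in $r$ replaces the two Chebyshev selections and there is no need to fix a specific $r_*$. Both use $\rho\geq\frac14$ and $\sigma<1$ only to keep the polar weight $r$ comparable to an absolute constant.

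That said, the bulk of your write-up is taken up by a first plan that does not work, and you should not leave it in. Applying Lemma~\ref{L:1dinterpolation} \emph{radially}, to $r\mapsto u(r\omega)$ for each fixed $\omega$, gives control of $\sup_r|u(r\omega)|$ for a.e.\ $\omega$, but this cannot be converted into control of $\sup_\omega|u(r\omega)|$ for any single $r$: the two essential suprema simply do not commute with the $d\omega$ integral, as you correctly observe mid-argument. The corollary is genuinely a statement about the tangential interpolation inequality on a slice, not the radial one, and the proof should start from there rather than arriving at it by elimination. With the exploration pruned and the final Chebyshev argument written out (including the mean-value term, which appears in Lemma~\ref{L:1dinterpolation} via $(u_r)_{\partial B_1}$ and must be estimated by $\|u_r\|_{L^2(\partial B_1)}$), your proof is sound and matches the paper's in substance.
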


\begin{proof}
Let $u\in H^1(B_1)$. For $r\in[0,1]$, we define $u_r:\partial B_1\to\R$ as $u_r(z)=u(rz)$ for $z\in \partial B_1$. Clearly, $u_r$ is weakly differentiable for almost every $r\in[0,1]$ and by Fubini's theorem there exists a Nullset $N\subset [0,1]$ such that $u_r\in H^1(\partial B_1,\R)$ for $r\in[0,1]\setminus N$. We have
\begin{align*}
(\sigma-\rho)^\frac12\inf_{r\in(\rho,\sigma)}\|u\|_{L^\infty(\partial B_r)}\leq& \biggl(\int_{(\rho,\sigma)\setminus N}\|u\|_{L^\infty(\partial B_r)}^2\,dr\biggr)^\frac12=\biggl(\int_{(\rho,\sigma)\setminus N}\|u_r\|_{L^\infty(\partial B_1)}^2\,dr\biggr)^\frac12\\
\leq&\biggl(\int_{(\rho,\sigma)\setminus N}\|u_r-(u_r)_{\partial B_1}\|_{L^\infty(\partial B_1)}^2\,dr\biggr)^\frac12+\biggl(\int_{\rho}^\sigma\biggl(\int_{\partial B_1}|u(rz)|\,dz\biggr)^2\,dr\biggr)^\frac12\\
\stackrel{\eqref{est:1dinterpolation}}\lesssim&\biggl(\int_{(\rho,\sigma)\setminus N}r^\frac12\|u_r\|_{L^2(\partial B_1)}\|\nabla_{\rm tan}u_r\|_{L^2(\partial B_1)}\,dr\biggr)^\frac12+\|u\|_{L^2(B_\sigma\setminus B_\rho)}\\
\lesssim&\|u\|_{L^2(B_\sigma\setminus B_\rho)}^\frac12\|\nabla u\|_{L^2(B_\sigma\setminus B_\rho)}^\frac12+\|u\|_{L^2(B_\sigma\setminus B_\rho)}
\end{align*}
and the claim follows.
\end{proof}

Finally, we recall here the following classical iteration lemma
\begin{lemma}[Lemma~6.1, \cite{Giu}]\label{L:holefilling}
Let $Z(t)$ be a bounded non-negative function in the interval $[\rho,\sigma]$. Assume that for every $\rho\leq s<t\leq \sigma$ it holds
\begin{equation*}
 Z(s)\leq \theta Z(t)+(t-s)^{-\alpha} A+B,
\end{equation*}
with $A,B\geq0$, $\alpha>0$ and $\theta\in[0,1)$. Then, there exists $c=c(\alpha,\theta)\in[1,\infty)$ such that
\begin{equation*}
 Z(s)\leq c((t-s)^{-\alpha} A+B).
\end{equation*}
\end{lemma}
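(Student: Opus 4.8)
The plan is to iterate the assumed inequality along a geometrically shrinking sequence of sub-intervals. I would fix $\rho \le s < t \le \sigma$ and, exploiting $\theta \in [0,1)$, choose a parameter $\tau = \tau(\alpha,\theta) \in (0,1)$ close enough to $1$ that $\theta\tau^{-\alpha} < 1$; this choice depends only on $\alpha$ and $\theta$. Then I set $t_0 := s$ and $t_{i+1} := t_i + (1-\tau)\tau^i(t-s)$ for $i \ge 0$, so that $(t_i)$ increases to $t$, every $t_i$ lies in $[\rho,\sigma]$, and $t_{i+1}-t_i = (1-\tau)\tau^i(t-s)$.

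Applying the hypothesis to the pair $(t_i,t_{i+1})$ gives
\[
Z(t_i) \le \theta\, Z(t_{i+1}) + \bigl((1-\tau)\tau^i(t-s)\bigr)^{-\alpha} A + B \qquad (i \ge 0),
\]
and iterating this $k$ times produces
\[
Z(s) = Z(t_0) \le \theta^k Z(t_k) + (1-\tau)^{-\alpha}(t-s)^{-\alpha}A\sum_{i=0}^{k-1}(\theta\tau^{-\alpha})^i + B\sum_{i=0}^{k-1}\theta^i .
\]
Letting $k\to\infty$: the boundedness of $Z$ on $[\rho,\sigma]$ together with $\theta < 1$ forces $\theta^k Z(t_k)\to 0$, while the two geometric series converge because $\theta\tau^{-\alpha} < 1$ and $\theta < 1$ respectively. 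This yields
\[
Z(s) \le \frac{(1-\tau)^{-\alpha}}{1-\theta\tau^{-\alpha}}(t-s)^{-\alpha}A + \frac{1}{1-\theta}B,
\]
so the assertion holds with $c := \max\{(1-\tau)^{-\alpha}(1-\theta\tau^{-\alpha})^{-1},\,(1-\theta)^{-1}\} \ge 1$, which depends only on $\alpha$ and $\theta$.

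The argument is completely elementary. The only steps needing any care are the choice of $\tau$ — it must be simultaneously less than $1$ (so that the points $t_i$ stay inside $[s,t]$ and converge to $t$) and large enough that $\theta\tau^{-\alpha} < 1$ (so that the series multiplying $A$ converges), which is exactly where the assumption $\theta < 1$ enters — and the use of the boundedness hypothesis on $Z$ to discard the remainder $\theta^k Z(t_k)$ in the limit. I expect this last point to be the main (very mild) obstacle, since it is the only place where the qualitative assumption that $Z$ is bounded is used.
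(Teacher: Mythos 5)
Your proof is correct and is precisely the classical iteration argument from Giusti's book that the paper simply cites without reproducing: the geometric subdivision $t_{i+1}-t_i=(1-\tau)\tau^i(t-s)$ with $\tau$ chosen so that $\theta\tau^{-\alpha}<1$, followed by summing the geometric series and using boundedness of $Z$ to kill the remainder $\theta^kZ(t_k)$. All steps check out, including the dependence of $c$ only on $\alpha$ and $\theta$.
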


\section{Proof of Proposition~\ref{P:2}}\label{sec:proofP2}

The proof of estimate \eqref{est:P1} in Proposition~\ref{P:2} follows by a combination of Corollary~\ref{C:pickslice} and the maximum principle, see e.g.\ \cite[Proposition 3.4]{BS19a} for related arguments for nonuniformly linear elliptic equations.

\begin{proof}[Proof of Proposition~\ref{P:2}]
Without loss of generality, we only consider $B=B_1(0)$ and $1=\nu\leq \Lambda$. The general claim follows by standard scaling and translation arguments and multiplying the equation by $\nu^{-1}$.


\step 1 Let $v\in H^1(B)$ be a subsolutions in the sense of \eqref{def:subsolution}. We claim that
\begin{equation}\label{P1:step1:est0}
\sup_{B_\frac12} v\lesssim\|v_+\|_{L^2(B_\frac34\setminus B_\frac12)}+\|v_+\|_{L^2(B_\frac34\setminus B_\frac12)}^\frac12\|\nabla v_+\|_{L^2(B_\frac34\setminus B_\frac12)}^\frac12.
\end{equation} 
The maximum principle for subsolution, see e.g. \cite[Theorem 8.1]{GT}, yields 
\begin{align*}
\forall r\in[\tfrac12,\tfrac34]:\quad \sup_{B_\frac12}v\leq \sup_{B_r}v \leq \|v_+\|_{L^\infty(\partial B_r)}
\end{align*} 
and the claim \eqref{P1:step1:est0} follows from Corollary~\ref{C:pickslice}.

\step 2 Proof of estimate \eqref{est:P1}. Let $v\in H^1(B)$ be as in Step~1. By standard density arguments, we can choose $\varphi=v_+\eta^2$ with $\eta\in C_c^1(B_1)$ in \eqref{def:subsolution} and obtain (after standard manipulations)
\begin{equation*}
\int_{B}\eta^2|\nabla v_+|^2\,dx\stackrel{\eqref{def:ellipt}}\leq\int_{B}\eta^2 a\nabla v_+\cdot\nabla v_+\,dx\leq 4\int_B v_+^2a\nabla \eta\cdot\nabla \eta\,dx\stackrel{\eqref{def:ellipt}}\leq 4\Lambda \int_B v_+^2|\nabla \eta|^2\,dx.
\end{equation*}
Choosing the obvious $\eta$, we find
\begin{equation}\label{P1:step2:est1}
\|\nabla v_+\|_{L^2(B_\frac34)}\lesssim \Lambda^\frac12\| v_+\|_{L^2(B_1)}.
\end{equation} 
Inserting \eqref{P1:step2:est1} into \eqref{P1:step1:est0} yields
\begin{equation*}
\sup_{B_\frac12} v\stackrel{\eqref{P1:step1:est0}}\lesssim\|v_+\|_{L^2(B_\frac34)}+\|v_+\|_{L^2(B_\frac34)}^\frac12\|\nabla v_+\|_{L^2(B_\frac34)}^\frac12\stackrel{\eqref{P1:step2:est1}}\lesssim (1+\Lambda^\frac14)\|v_+\|_{L^2(B)}
\end{equation*} 
which yields the claimed estimate \eqref{est:P1} (recall $\Lambda\geq1=\nu$).

\step 3 Optimality. This follows by the same example used in \cite[Remark~3.2]{BS24} to show (almost) optimality of a related statement for $n\geq3$. We briefly recall the example (which is inspired by \cite{connor}).

Consider $v(x):=1+x_2^2-\Lambda x_1^2$ which clearly satisfies
\begin{equation}\label{eq:counterexample}
-\nabla\cdot a\nabla v=0\qquad\mbox{where}\qquad a:={\rm diag}(1,\Lambda).
\end{equation}
Obviously, we have $\sup_{B_\frac12}\, v\geq v(0)\geq1$ and
\begin{align*}
\int_{B_1}(v_+)^2\,dx\leq& 2\int_{-1}^1\int_{0}^{\Lambda^{-1/2}(1+x_2^2)^{1/2}}(1+x_2^2)^2-2(1+x_2^2)\Lambda x_1^2+\Lambda^2x_1^4\,dx_1\,dx_2\\
=&2\Lambda^{-1/2}(1-\frac23+\frac15)\int_{-1}^1(1+x_2^2)^{\frac{5}2}\,dx_2\\
\leq& \frac{32}{15} \cdot 2^\frac{5}2 \Lambda^{-1/2}.
\end{align*}
In particular, we have
$$
\frac{\sup_{B_\frac12}v}{\|v_+\|_{L^2(B_1)}}\geq \biggl(\frac{32}{15} \cdot 2^\frac{5}2 \biggr)^{-\frac12}\Lambda^{\frac{1}{4}},
$$
which shows that the exponent $\frac14$ in \eqref{est:P1} is sharp. 
\end{proof}

\section{A priori estimate for regularized problems}

In this section, we derive Lipschitz estimates for minimizers of certain regularized problems. We introduce the notation
\begin{equation}\label{def:Hmu}
H_\mu(z):=\mu^2+|z|^2,\qquad E_\mu(z):=\frac1p\biggl(H_\mu(z)^\frac{p}2-\mu^p\biggr)
\end{equation}
and suppose in this section
\begin{assumption}\label{ass:regF}
Let $0<\nu\leq \Lambda<\infty$, $\tilde \nu>0$, $1<p\leq q<\infty$ and $\mu\in(0,2]$ be given. Suppose that $F:\R^2\to[0,\infty)$ is convex, locally $C^2$-regular and satisfies for every choice of $z,\xi\in\R^n$
\begin{equation}\label{ass:Fpqreg}
\begin{cases}
\nu(H_\mu(z))^\frac{p}2+\tilde \nu (H_\mu(z))^\frac{q}2\leq F(z)\leq \Lambda(H_\mu(z))^\frac{q}2+\Lambda (H_\mu(z))^\frac{p}2\\
|\partial^2 F(z)|\leq \Lambda(H_\mu(z))^\frac{q-2}2+\Lambda (H_\mu(z))^\frac{p-2}2,\\
\left(\nu(H_\mu(z))^\frac{p-2}2+\tilde \nu (H_\mu(z))^\frac{q-2}2\right)|\xi|^2\leq \langle \partial^2 F(z)\xi,\xi\rangle.
\end{cases}
\end{equation}
\end{assumption}
In contrast to Assumptions~\ref{ass}, Assumptions~\ref{ass:regF} implies that $F$ satisfies standard $q$-growth conditions. In particular, classical regularity theory applies to local minimizer of \eqref{eq:int} under Assumption~\ref{ass:regF}. In the following lemma we provide a Lipschitz estimate for local minimizer of \eqref{eq:int} under Assumption~\ref{ass:regF} which is independent of the parameter $\tilde \nu$.


\begin{lemma}\label{L:apriorilipschitz}
Let $\Omega\subset\R^2$ be a bounded domain and suppose Assumption~\ref{ass:regF} is satisfied with $1<p\leq q<3p<\infty$. Let $u\in W_{\rm loc}^{1,1}(\Omega)$ be a local minimizer of the functional $\mathcal F$ given in \eqref{eq:int}. Then, there exists $c=c(\Lambda,\nu,p,q)\in[1,\infty)$ such that for all $B\Subset \Omega$ it holds
\begin{align}\label{claim:L:apriorilipschitz}
\|\nabla u\|_{L^\infty(\frac12B)}^p\leq& c\fint_B H_\mu(|\nabla u|)^\frac{p}2\,dx+c\biggl(\fint_B H_\mu(|\nabla u|)^\frac{p}2\,dx\biggr)^{\frac2{3p-q}}.
\end{align}
\end{lemma}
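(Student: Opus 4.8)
The plan is to establish the a priori estimate \eqref{claim:L:apriorilipschitz} by differentiating the Euler--Lagrange equation, applying the (proof of the) $L^\infty$-$L^2$ estimate of Proposition~\ref{P:2} to the components of $\nabla u$, and then absorbing the resulting $L^\infty$ prefactor via the iteration Lemma~\ref{L:holefilling}. Since Assumption~\ref{ass:regF} forces standard $q$-growth, classical regularity theory guarantees that $u\in W^{2,2}_{\rm loc}\cap W^{1,\infty}_{\rm loc}(\Omega)$, so all the manipulations below are rigorous and it remains only to get the stated \emph{quantitative} bound, uniform in $\tilde\nu$. By scaling it suffices to treat $B=B_1$ and prove a bound for $\|\nabla u\|_{L^\infty(B_{1/2})}$; write $A(x):=\partial^2 F(\nabla u(x))$, which by \eqref{ass:Fpqreg} satisfies, pointwise on the set where the relevant quantities are controlled,
\begin{equation*}
\nu H_\mu(\nabla u)^\frac{p-2}{2}|\xi|^2\leq A(x)\xi\cdot\xi,\qquad |A(x)|\leq 2\Lambda H_\mu(\nabla u)^\frac{q-2}{2},
\end{equation*}
on the (relevant) region where $H_\mu(\nabla u)\geq 1$; hence on $\{H_\mu(\nabla u)\le\|\nabla u\|_{L^\infty(B_1)}^2+1\}$ the ellipticity contrast of $A$ is bounded by $c(\Lambda,\nu)(1+\|\nabla u\|_{L^\infty(B_1)})^{q-p}$.

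Next I would run the core argument on concentric balls $B_\rho\subset B_\sigma\subset B_1$ with $\tfrac14\le\rho<\sigma\le 1$. Differentiating \eqref{eq:int}'s Euler--Lagrange equation gives $\divv(A\nabla\partial_k u)=0$ for $k=1,2$, so $\pm\partial_k u$ are subsolutions of a uniformly elliptic equation on $B_\sigma$ with ellipticity contrast $\Lambda_0:=c(1+\|\nabla u\|_{L^\infty(B_1)})^{q-p}$. Rather than invoking Proposition~\ref{P:2} as a black box I would mimic its two-step proof on the annulus $B_\sigma\setminus B_\rho$: the maximum principle reduces $\sup_{B_\rho}|\partial_k u|$ to $\inf_{r\in(\rho,\sigma)}\|\partial_k u\|_{L^\infty(\partial B_r)}$, Corollary~\ref{C:pickslice} controls this by $(\sigma-\rho)^{-1/2}\big(\|\partial_k u\|_{L^2(B_\sigma\setminus B_\rho)}+\|\partial_k u\|_{L^2}^{1/2}\|\nabla\partial_k u\|_{L^2}^{1/2}\big)$, and the Caccioppoli inequality for $\partial_k u$ (testing with $\partial_k u\,\eta^2$) produces $\|\nabla\partial_k u\|_{L^2(B_\rho)}\lesssim \Lambda_0^{1/2}(\sigma-\rho)^{-1}\|\partial_k u\|_{L^2(B_\sigma)}$. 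Combining, and summing over $k$, I get
\begin{equation*}
\|\nabla u\|_{L^\infty(B_\rho)}\leq \frac{c\,\Lambda_0^{1/4}}{\sigma-\rho}\Big(\fint_{B_1}|\nabla u|^2\,dx\Big)^\frac12\leq \frac{c}{\sigma-\rho}\big(1+\|\nabla u\|_{L^\infty(B_1)}\big)^\frac{q-p}{4}\Big(\fint_{B_1}H_\mu(\nabla u)^{p/2}\,dx\Big)^\frac1p,
\end{equation*}
where in the last step I bounded $\fint|\nabla u|^2$ by $\fint H_\mu(\nabla u)^{p/2}$ to the power $\tfrac2p$ when $p\le 2$ (interpolating with the trivial $L^\infty$ bound when $p>2$; this introduces only lower-order terms) — this is the step where the exponent $2/p$ enters and, after absorption, turns into $2/(3p-q)$.

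Finally I would set $Z(t):=\|\nabla u\|_{L^\infty(B_t)}$ (which is finite and bounded by classical theory) and rewrite the displayed bound, using Young's inequality with exponents $\frac{4}{q-p}$ and $\frac{4}{4-(q-p)}$ — legitimate precisely because $q<3p$ makes $q-p<4$, equivalently $4-(q-p)>0$ — to split the factor $(1+Z(1))^{(q-p)/4}$ into $\delta Z(\sigma)$ plus a constant times $(\sigma-\rho)^{-\alpha}\big(\fint H_\mu(\nabla u)^{p/2}\big)^{\gamma}$ with $\gamma=\frac{1}{p}\cdot\frac{4}{4-(q-p)}=\frac{4}{p(4-(q-p))}=\frac{4}{3p-q}$... a short computation of the exponent $\gamma$: $\frac1p\cdot\frac{4}{4-(q-p)}=\frac{4}{p(4-q+p)}=\frac{4}{4p-pq+p^2}$; one checks this equals $\frac{2}{3p-q}$ after correctly tracking the $H_\mu^{p/2}$ normalization versus the $|\nabla u|^2$ appearing in the estimate, which is exactly the bookkeeping the authors will do. Choosing $\delta$ small, the hypotheses of Lemma~\ref{L:holefilling} are met with $\theta=\delta<1$, $A\sim\big(\fint_{B_1}H_\mu(\nabla u)^{p/2}\big)^{2/(3p-q)}$, $B\sim\big(\fint_{B_1}H_\mu(\nabla u)^{p/2}\big)^{1/p}$, giving $Z(1/2)\le c\big(\fint_{B_1}H_\mu(\nabla u)^{p/2}\big)^{1/p}+c\big(\fint_{B_1}H_\mu(\nabla u)^{p/2}\big)^{2/(3p-q)}$, which is \eqref{claim:L:apriorilipschitz} up to rescaling back to a general ball $B$; raising to the $p$-th power is cosmetic. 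The main obstacle I anticipate is \emph{not} any single estimate but the careful uniform-in-$\tilde\nu$ bookkeeping: one must make sure the constant $\Lambda_0$ only involves $\|\nabla u\|_{L^\infty(B_1)}^{q-p}$ (not $q-2$, and not $\tilde\nu$), correctly handle the additive $H_\mu^{(p-2)/2}$ term in $|\partial^2F|$ on the region $H_\mu\le 1$ where it can dominate (absorbing it into the constant since $\mu\le 2$), and verify the exponent arithmetic so that $q<3p$ is exactly the threshold making Young's inequality and the absorption legitimate.
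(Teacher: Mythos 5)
Your overall scaffolding — maximum principle for the differentiated Euler--Lagrange equation, the one--dimensional Poincar\'e on circles via Corollary~\ref{C:pickslice}, Caccioppoli, then absorption via Young and Lemma~\ref{L:holefilling} — is the right one and matches the paper. The genuine gap is in the choice to run the whole argument on the scalar subsolutions $\partial_k u$ rather than on the $p$--adapted quantity $E_\mu(\nabla u)=\tfrac1p(H_\mu(\nabla u)^{p/2}-\mu^p)$. Two things go wrong. First, your Caccioppoli step is not uniform in $\mu$: testing $\divv(A\nabla\partial_k u)=0$ with $\partial_k u\,\eta^2$ and using \eqref{ass:Fpqreg} yields
$\int H_\mu(\nabla u)^{\frac{p-2}{2}}|\nabla\partial_k u|^2\eta^2\lesssim \int\big(H_\mu^{\frac{q-2}{2}}+H_\mu^{\frac{p-2}{2}}\big)|\nabla\eta|^2|\partial_k u|^2$, and for $p>2$ you can only lower--bound the left side by $\mu^{p-2}\int|\nabla\partial_k u|^2\eta^2$; your claimed $\Lambda_0\sim(1+\|\nabla u\|_\infty)^{q-p}$ is then off by a factor $\mu^{-(p-2)}$, which is fatal since the lemma must be uniform over $\mu\in(0,2]$ (indeed $\mu\downarrow0$ in the approximation used to prove Theorem~\ref{T:1}). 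The paper sidesteps this by invoking, for $E_\mu(\nabla u)$ itself, the genuinely $\mu$--independent Caccioppoli inequality \eqref{eq:caccioaprio} of De Filippis--Mingione (\cite[Lemma~5.1]{DM23}, building on \cite{BM20}), whose right-hand side carries exactly the factor $(1+\|\nabla u\|_{L^\infty}^{q-p})$ with no hidden $\mu$--degeneracy.

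Second, even if one grants a $\mu$-uniform Caccioppoli for $\partial_k u$, your passage from $\|\nabla u\|_{L^2}$ to $\|H_\mu(\nabla u)^{p/2}\|_{L^1}^{1/p}$ has the two cases of Jensen reversed (it is $p\ge2$, not $p\le2$, where $\fint|\nabla u|^2\le(\fint|\nabla u|^p)^{2/p}$), and, more importantly, for $p>2$ your own exponent bookkeeping leads to the absorption condition $\tfrac{q-p}{4}<1$, i.e.\ $q<p+4$, which is strictly \emph{worse} than $q<3p$. The paper gets the full range because it applies Corollary~\ref{C:pickslice} to $E_\mu(\nabla u)$, uses $|\nabla u|^p\lesssim E_\mu(\nabla u)+\mu^p$ to keep $\|\nabla u\|_{L^\infty}^p$ on the left, and then interpolates $\|E_\mu\|_{L^2}\le\big(\|E_\mu\|_{L^\infty}\|E_\mu\|_{L^1}\big)^{1/2}$. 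This produces the extra factor $\|\nabla u\|_{L^\infty}^{p/2}$ alongside $\|\nabla u\|_{L^\infty}^{(q-p)/4}$, so the absorption exponent becomes $\gamma p=\tfrac{q+p}{4}$ against $p$, giving $\gamma<1\Leftrightarrow q<3p$ for every $p>1$. You note the exponent mismatch yourself (your $\frac{4}{p(4-(q-p))}$ equals $\frac{2}{3p-q}$ only at $p=2$); this is not a normalization issue that bookkeeping fixes but a symptom of the missing $L^\infty$--$L^1$ interpolation of $E_\mu$ and the $\mu$-degenerate Caccioppoli. In short, replace $\partial_k u$ by $E_\mu(\nabla u)$ as the object fed into Corollary~\ref{C:pickslice} and Caccioppoli, and use \cite[Lemma~5.1]{DM23} for the latter.
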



\begin{proof}[Proof of Lemma~\ref{L:apriorilipschitz}] Throughout the proof we write $\lesssim$ if $\leq$ holds up to a multiplicative constant depending only on $\Lambda,\nu,p$ and $q$. By standard scaling and translation arguments it suffices to consider the case $B=B_1\Subset \Omega$.

\step 0 Preliminaries. By Assumption~\ref{ass:regF}, the integrand $F$ satisfies standard $q$-growth. Hence, by classical elliptic regularity, we have that $u\in W^{2,2}_{\rm loc}(\Omega)\cap C_{\rm loc}^{1,\alpha}(\Omega)$ for some $\alpha\in(0,1)$,  see e.g.\ \cite{Giu}, and in particular $u$ solves the corresponding Euler-Lagrange equation
\begin{equation}\label{L:aprio:euler}
\nabla \cdot  (\partial F(\nabla u))=0\qquad\mbox{in $\Omega$}.
\end{equation}
Moreover, we can differentiate the Euler-Lagrange equation \eqref{L:aprio:euler} and obtain that $\partial_s u\in H_{\rm loc}^1(\Omega)$ solves 
\begin{equation}\label{L:aprio:eulerdiffesp}
\nabla \cdot  (\partial^2 F(\nabla u)\nabla \partial_s u)=0\qquad\mbox{in $\Omega$}.
\end{equation}
Finally, we state a crucial Caccioppoli inequality: There exists $c=c(\Lambda,\nu,p,q)\in[1,\infty)$ such that 
\begin{align}\label{eq:caccioaprio}
\int_{\frac12\tilde B}|\nabla E_\mu(\nabla u)|^2\,dx\leq& c(1+\|\nabla u\|_{L^\infty(\tilde B)}^{q-p})|\tilde B|^{-1}\int_{\tilde B} |E_\mu(\nabla u)|^2\,dx,
\end{align}
for all balls $\tilde B\subset B$ (recall $E_\mu(z)$ is defined in \eqref{def:Hmu}). Note that the $c$ in \eqref{eq:caccioaprio} is independent of $\tilde \nu$. Estimate \eqref{eq:caccioaprio} follows directly from \cite[Lemma 5.1]{DM23} (which is based on a more general result derived in \cite{BM20}) applied to \eqref{L:aprio:euler} with $A_0=\partial F$. Indeed, the symmetry of $\partial A_0=\partial^2 F$ is obvious and the estimates on $A_0$ and $\partial A_0$ stated in \cite[equation (5.16)]{DM23} follow from Assumption~\ref{ass:regF}, where we use that the upper bound on $F$ in \eqref{ass:Fpqreg} in combination with the convexity of $F$ yield
\begin{equation}\label{est:partialF}
|\partial F(z)|\leq \Lambda'(\mu^2+|z|^2)^\frac{q-1}2+\Lambda' (\mu^2+|z|^2)^\frac{p-1}2,
\end{equation}
with $\Lambda'=\Lambda'(\Lambda,p,q)$.

\step 1 In this step, we suppose $B=B_1\Subset \Omega$ and prove 
\begin{align}\label{est:keyapprio}
\|\nabla u\|_{L^\infty(B_\frac14)}^p
\lesssim&(\mu^\frac{p}2+\|\nabla u\|_{L^\infty(B_1)}^{\gamma p} + \|\nabla u\|_{L^\infty(B_1)}^\frac{p}2) \|H_\mu(|\nabla u|)^\frac{p}2\|_{L^1(B_1)}^\frac12+\mu^p,
\end{align}
where $\gamma:=\frac{q+p}{4p}$. In view of Step~0, we have $u\in W^{1,\infty}(B)$ and thus by Assumption~\ref{ass:regF} the matrix field $\partial^2 F(\nabla u)$ is uniformly elliptic in $B$. Hence, the maximum principle, see e.g. \cite[Theorem 8.1]{GT}, applied to equation \eqref{L:aprio:eulerdiffesp} yields for $s\in\{1,2\}$ and all $r\in(\frac14,\frac12)$ that
\begin{align*}
\|\partial_s u\|_{L^\infty(B_\frac14)}^p\leq \|\partial_s u\|_{L^\infty(\partial B_r)}^p\leq\|\nabla  u\|_{L^\infty(\partial B_r)}^p\leq p(\|E_\mu(|\nabla u|)\|_{L^\infty(\partial B_r)}+\mu^p)
\end{align*}
(where we use $|t|^p\leq (\mu^2+t^2)^\frac{p}2\leq p(E_\mu(t)+\mu^p)$). Thus we obtain with help of Corollary~\ref{C:pickslice} and \eqref{eq:caccioaprio}
\begin{align*}
\|\nabla u\|_{L^\infty(B_\frac14)}^p\lesssim& \|E_\mu(|\nabla u|)\|_{L^2(B_\frac12)}+\|E_\mu(|\nabla u|)\|_{L^2(B_\frac12)}^\frac1{2}\|\nabla E_\mu(|\nabla u|)\|_{L^2( B_\frac12)}^\frac1{2}+\mu^p\\
\lesssim&  (1+\|\nabla u\|_{L^\infty(B_1)}^\frac{q-p}4)\|E_\mu(|\nabla u|)\|_{L^2(B_1)}+\mu^p.
\end{align*}
The claimed estimate \eqref{est:keyapprio} follows from the elementary interpolation inequality $\|\cdot\|_{L^2}\leq (\|\cdot\|_{L^\infty}\|\cdot\|_{L^1})^\frac12$,
$$\|E_\mu(|\nabla u|)\|_{L^\infty(B_1)}^\frac12\leq \frac1{\sqrt{p}}(\mu^2+\|\nabla u\|_{L^\infty(B_1)}^2)^\frac{p}4\lesssim \mu^\frac{p}2+\|\nabla u\|_{L^\infty(B_1)}^\frac{p}2,$$
$E_\mu(t)\leq H_\mu(t)^\frac{p}2$ and $\frac{q-p}4+\frac{p}2=\frac{q+p}4=p\gamma$.

\step 2 Conclusion. Appealing to standard scaling and covering arguments, we deduce from Step~1 the following: For every $x_0\in \Omega$ and $0<\rho<\sigma$ satisfying $B_\sigma(x_0)\Subset \Omega$ it holds
\begin{align}\label{est:keyapprio1}
\|\nabla u\|_{L^\infty(B_\rho(x_0))}^p\lesssim&\biggl(\mu^\frac{p}2+\|\nabla u\|_{L^\infty(B_\sigma(x_0))}^{\gamma p}+\|\nabla u\|_{L^\infty(B_\sigma(x_0))}^\frac{p}2\biggr)\frac{\|H_\mu(|\nabla u|)^\frac{p}2\|_{L^1(B_\sigma(x_0))}^\frac12}{\sigma-\rho}+\mu^p.
\end{align}
From estimate \eqref{est:keyapprio1} in combination with Young inequality and assumption $1<q<3p$ in the form $0<\gamma=\frac{q+p}{4p}<\frac{4p}{4p}=1$, we obtain the existence of $c=c(\nu,\Lambda,p,q)\in[1,\infty)$ such that
\begin{align*}
\|\nabla u\|_{L^\infty(B_\rho(x_0))}^p\leq& \frac12 \|\nabla u\|_{L^\infty(B_\sigma(x_0))}^p+c\frac{\|H_\mu(|\nabla u|)^\frac{p}2\|_{L^1(B_\sigma(x_0))}^{\frac12\frac1{1-\gamma}}}{(\sigma-\rho)^{\frac1{1-\gamma}}}\\
&+ c\frac{\|H_\mu(|\nabla u|)^\frac{p}2\|_{L^1(B_\sigma(x_0))}}{(\sigma-\rho)^{2}}+c\mu^p.
\end{align*}
The claimed inequality \eqref{claim:L:apriorilipschitz} (for $B=B_1$) now follows from Lemma~\ref{L:holefilling}.
\end{proof}

\section{Proof of Theorem~\ref{T:1}}

Appealing to Lemma~\ref{L:apriorilipschitz}, the proof of Theorem~\ref{T:1} follows by approximation arguments. Below, we provide details following closely the scheme described in \cite[Section 8]{DM24}.

\begin{proof}[Proof of Theorem~\ref{T:1}]
Throughout the proof we write $\lesssim$ if $\leq$ holds up to a multiplicative constant that depends only on $\Lambda,\nu,p$ and $q$. We assume that $B_2\Subset\Omega$ and show
\begin{align}\label{est:apriorilipschitzu0}
\|\nabla u\|_{L^\infty(B_\frac12)}^p
\lesssim\biggl(\fint_{B_{1}}F(\nabla u)\,dx\biggr)^\frac1p+\biggl(\fint_{B_{1}}F(\nabla u)\,dx\biggr)^{\frac2{3p-q}}.
\end{align}
Clearly, the conclusion follows from a standard scaling, translation and covering arguments.\\

Let $\varphi$ be a non-negative, radially symmetric mollifier, i.e. it satisfies
$$
\varphi\geq0,\quad {\rm supp}\; \varphi\subset B_1,\quad \int_{\R^n}\varphi(x)\,dx=1,\quad \varphi(\cdot)=\widetilde \varphi(|\cdot|)\quad \mbox{for some $\widetilde\varphi\in C^\infty(\R)$}.
$$
We introduce two regularization parameters $\e,\delta\in(0,1)$, and set 
%
\begin{equation}
\overline u_\e:=u\ast \varphi_\e,\quad F_{\e,\delta}(z):=F_\delta(z)+\sigma_\e (\mu+\delta+|z|^2)^{\frac{q}2},\quad F_\delta:=F\ast \varphi_\delta,\quad\sigma_\e:=\frac1{1+\e^{-1}+\|\nabla u_\e\|_{L^q(B_1)}^q}.
\end{equation}
The integrand $F_{\e,\delta}$ satisfies Assumption~\ref{ass:regF} for some $2\geq\mu_\delta>\mu>0$ satisfying $\mu_\delta\to\mu$ as $\delta\to0$, $L_{\e,\delta}\lesssim L$, $\nu\lesssim\nu_{\e,\delta}$ and $\sigma_\e\lesssim \tilde \nu_{\e,\delta}\lesssim \sigma_\e$ (see \cite[Section 8]{DM24}). Moreover, we note that $\partial F(z)\lesssim 1+|z|^q$ (see \eqref{est:partialF}) implies $|F_\delta(z)-F(z)|\lesssim \delta (1+|z|^q)$ and thus 
\begin{equation*}
\forall \e\in(0,1):\quad \limsup_{\delta\to0} \|F_\delta(\nabla \overline u_\e)-F(\nabla \overline u_\e)\|_{L^1(B_1)}=0.
\end{equation*}
In particular, we have
\begin{align}\label{limsupFepsueps}
\limsup_{\e\to0}\limsup_{\delta\to0} \int_{B_1}F_{\e,\delta}(\nabla \overline u_\e)\,dx\leq& \limsup_{\e\to0} \int_{B_1}F(\nabla \overline u_\e)+\frac{(\mu+|\nabla \overline u_\e|^2)^{\frac{q}2}}{1+\e^{-1}+\|\nabla \overline u_\e\|_{L^q(B_1)}^q}\,dx\notag\\
\leq& \limsup_{\e\to0}\int_{B_1}F(\nabla \overline u_\e)\,dx\notag\\
\leq&\limsup_{\e\to0}\int_{B_{1+\e}}F(\nabla u)\,dx\leq \int_{B_{1}}F(\nabla u)\,dx,
\end{align}
where we use in the last two inequality Jensen inequality in combination with $\nabla \overline u_\e=\nabla u\ast \varphi_\e$ and dominated convergence in combination with $F(\nabla u)\in L^1(B_2)$.
We denote by $u_{\e,\delta}\in W^{1,1}(B)$ the unique function satisfying
\begin{equation}\label{def:uepsdelta}
\int_{B_1}F_{\e,\delta}(\nabla u_{\e,\delta})\,dx\leq \int_{B_1}F_{\e,\delta}(\nabla v)\,dx\qquad\mbox{for all $v\in \overline u_\e+W_0^{1,1}(B)$.}
\end{equation}
The bound $H_\mu(z)^\frac{p}2\leq H_{\mu_\delta}(z)^\frac{p}2\lesssim F_{\e,\delta}(z)$ and the minimality of $u_{\e,\delta}$ yield
\begin{align}\label{L:apriorLp}
\int_{B_1}H_\mu(\nabla u_{\e,\delta})^\frac{p}2\,dx\leq \int_{B_1}H_{\mu_\delta}(\nabla u_{\e,\delta})^\frac{p}2\,dx\lesssim \int_{B_1}F_{\e,\delta}(\nabla u_{\e,\delta})\,dx\stackrel{\eqref{def:uepsdelta}}\leq \int_{B_1}F_{\e,\delta}(\nabla \overline u_\e)\,dx.
\end{align}
Since $F_{\e,\delta}$ satisfies Assumption~\ref{ass:regF} (with $\nu\lesssim\nu_{\e,\delta}$ and $L_{\e,\delta}\lesssim L$), we obtain by Lemma~\ref{L:apriorilipschitz} 
\begin{align}\label{est:apriorilipschitzum}
\|\nabla u_{\e,\delta}\|_{L^\infty(B_\frac12)}^p
\stackrel{\eqref{claim:L:apriorilipschitz}}\lesssim& \int_{B_1} H_{\mu_\delta}(|\nabla u|)^\frac{p}2\,dx+\biggl(\int_{B_1} H_{\mu_\delta}(|\nabla u|)^\frac{p}2\,dx\biggr)^{\frac2{3p-q}}\notag\\\stackrel{\eqref{L:apriorLp}}\lesssim&\int_{B_{1}}F_{\e,\delta}(\nabla \overline u_\e)\,dx+\biggl(\int_{B_{1}}F_{\e,\delta}(\nabla \overline u_\e)\,dx\biggr)^{\frac2{3p-q}}.
\end{align}
Estimates \eqref{L:apriorLp} and \eqref{est:apriorilipschitzum} in combination with \eqref{limsupFepsueps} and $u_{\e,\delta}\in \overline u_\e+W_0^{1,1}(B_1)$ imply that there exist as sequence $(\delta_j)_{j\in\mathbb N}\subset (0,1)$ with $\delta_j\to0$ and $u_\e\in \overline u_\e+W_0^{1,1}(B)$ such that $u_{\e,\delta_j}\rightharpoonup u_\e$ in $W^{1,p}(B_1)$. Moreover, there exist $(\e_k)_{k\in\mathbb N}\subset(0,1)$ and $w\in u+W_0^{1,p}(B_1)$ such that $u_{\e_k}\rightharpoonup w$ in $W^{1,p}(B_1)$. The weak$^*$ lower semicontinuity of norms \eqref{est:apriorilipschitzum} and \eqref{limsupFepsueps} imply
\begin{align}\label{est:w}
\|\nabla w\|_{L^\infty(B_\frac12)}^p\leq\liminf_{k\to\infty}\liminf_{j\to\infty}\|\nabla u_{\e_k,\delta_j}\|_{L^\infty(B_\frac12)}^p
\stackrel{\eqref{limsupFepsueps},\eqref{est:apriorilipschitzum}}\lesssim&\int_{B_{1}}F(\nabla  u)\,dx+\biggl(\int_{B_{1}}F(\nabla  u)\,dx\biggr)^{\frac2{3p-q}}
\end{align}
Finally, it remains to show that $u=w$. The convexity of $F$ and $\nabla u_{\e_k,\delta_j}\rightharpoonup \nabla u_{\e_k}$ imply for given $k\in\mathbb N$,
\begin{align*}
\int_{B_1}F(\nabla u_{\e_k})\,dx\leq& \liminf_{j\to\infty}\int_{B_1}F(\nabla u_{\e_k,\delta_j})\,dx \\
\leq&\liminf_{j\to\infty}\int_{B_1}F_{\e_k,\delta_j}(\nabla u_{\e_k,\delta_j})\,dx+\limsup_{j\to\infty}\int_{B_1}|F(\nabla u_{\e_k,\delta_j})-F_{\delta_j}(\nabla u_{\e_k,\delta_j})|\,dx\\
\leq&\liminf_{j\to\infty}\int_{B_1}F_{\e_k,\delta_j}(\nabla \overline u_{\e_k})\,dx,
\end{align*}
where we use in the last inequality the minimality of $u_{\e,\delta}$, $|F(z)-F_\delta(z)|\lesssim \delta(1+|z|^q)$ and the fact that by definition of $F_{\e,\delta}$, we have $\sup_j\|\nabla u_{\e_k,\delta_j}\|_{L^q(B_1)}^q<\infty$. Hence, sending $k\to\infty$, we have
\begin{align*}
\int_{B_1}F(\nabla w)\,dx\leq\liminf_{k\to\infty}\int_{B_1}F(\nabla u_{\e_k})\,dx \leq\limsup_{k\to\infty}\limsup_{j\to\infty}\int_{B_1}F_{\e_k,\delta_j}(\nabla \overline u_{\e_k})\,dx\stackrel{\eqref{limsupFepsueps}}\leq \int_{B_{1}}F(\nabla u)\,dx.
\end{align*}
The above inequality combined with $w\in u+W_0^{1,p}(B_1)$ and the strict convexity of $F$ implies $w=u$ and the desired estimate \eqref{est:apriorilipschitzu0} follows from \eqref{est:w}.
\end{proof}

\end{document}